\theoremstyle{definition}
\newtheorem{definition}{Definition}
\theoremstyle{plain}
\newtheorem{theorem}{Theorem}
\newtheorem{lemma}[theorem]{Lemma}
\newtheorem{corollary}[theorem]{Corollary}
\newtheorem{proposition}[theorem]{Proposition}
\def\Z{\mathbb{Z}}
\def\eps{\varepsilon}
\begin{document}
\author{Shilin Ma\\ Kevin McGown \\ Devon Rhodes \\ Mathias Wanner}
\title{Explicit bounds for small prime nonresidues}
\maketitle
\begin{center}
\end{center}
\vspace{2ex}

\begin{abstract}
Let $\chi$ be a Dirichlet character modulo a prime~$p$.
We give explicit upper bounds on $q_1<q_2<\dots<q_n$, the $n$ smallest prime nonresidues of $\chi$.
More precisely, given $n_0$ and $p_0$ there exists an absolute constant $C=C(n_0,p_0)>0$
such that $q_n\leq Cp^{\frac{1}{4}}(\log p)^{\frac{n+1}{2}}$
whenever $n\leq n_0$ and $p\geq p_0$.
\end{abstract}

\section{Introduction}
Let $\chi$ be a nonprincipal Dirichlet character modulo a prime~$p$.
If $\chi(n)\notin\{0,1\}$, then we refer to $n$ as a nonresidue of $\chi$.
Let $q_1<q_2<\dots<q_\ell$ denote the $\ell$ smallest prime nonresidues of~$\chi$.
Giving an upper bound on $q_1$ is an important classical problem that has received much attention.
Indeed, in the case of the Legendre symbol,
$q_1$ is the least quadratic nonresidue mod $p$.
In~1963, Burgess showed that for each $\eps>0$, one has $q_1\ll p^{\frac{1}{4\sqrt{e}}+\eps}$ (see~\cite{bib:Burgess1,bib:Burgess2}),
and this result has stood as the state of the art since this time, save a recent improvement to the ``$\eps$'' in the quadratic case (see~\cite{bib:Banks.Guo}).
In~2015,
Pollack proved the following result (see~\cite{bib:Pollack}):  For each $\eps>0$, there are numbers
$m_0=m_0(\eps)$ and $\kappa=\kappa(\eps)>0$ such that
for all $m>m_0$ and each nonprincipal character $\chi$ modulo $m$, there are more than
$m^\kappa$ prime nonresidues of $\chi$ not exceeding $m^{\frac{1}{4\sqrt{e}}+\eps}$.
In particular, for all $\eps>0$ and all $k$, one has $q_k\ll p^{\frac{1}{4\sqrt{e}}+\eps}$,
although this hides the dependence on $k$ and $\eps$.
As we alluded to a moment ago, Banks and Guo have recently shown that
$q_k \ll p^{\frac{1}{4\sqrt{e}}}\exp(\sqrt{e^{-1}\log p\log\log p})$
in the case where $\chi$ is the Legendre symbol (see~\cite{bib:Banks.Guo}),
provided $k\leq p^{\frac{1}{8\sqrt{e}}}\exp(\frac{1}{2}\sqrt{e^{-1}\log p\log\log p}-\frac{1}{2}\log\log p)$.

Often in applications (see, for example,~\cite{bib:McGown2,bib:McGown.Lezowski,bib:Pollack.Trevino,bib:Booker})
one requires estimates that are completely explicit,
and one is willing to accept a weaker asymptotic in order to obtain constants of a reasonable magnitude.
Our goal here is to give an explicit upper bound on $q_k$, the $k$th smallest prime nonresidue.
Naturally, our upper bounds are
asymptotically weaker than those given in~\cite{bib:Pollack} and~\cite{bib:Banks.Guo}.
The following is our main result from which one can easily derive bounds of the desired form.
\begin{theorem}\label{T:1}
Let $\chi$ be a nonprincipal Dirichlet character modulo~$p$.
Let $u\in\Z^+$ be squarefree and assume all its prime factors are less than $p$.
Set $n=\omega(u)+1$ where $\omega(u)$ is the number of distinct prime factors of $u$.
Suppose that $\chi(m)=1$
whenever $0<m\leq H<p$ and $(m,u)=1$. 
Then
\[
  H\leq p^{\frac{1}{4}}(\log p)^{\frac{n+1}{2}}g(n,p)
\]
where
\[
g(n,p) =\frac{\pi}{3}\sqrt{2e}\left(\frac{n}{n+1}\right)\left(\frac{1+\frac{\sqrt{2}}{2B\log{p}-3}}{1-\frac{\pi^2}{9}\frac{\log{X^*}+9}{3X^*}}\right)^{\frac{1}{2}}
\]
using
\[
X^*=\frac{\pi}{3}(2e)^{-\frac{1}{2}}\left(\frac{n+1}{n}\right)^{n-1}\frac{p^\frac{1}{4}}{(\log p)^{\frac{n-1}{2}}}\cdot
\left(1-\frac{n+1}{n}e^{-1/n}(\log p)^{-1}\right)
\,,\qquad
B=\frac{n}{2(n+1)}
\,,
\]
provided $X^*>3.8$ and
$\log p>\max\{\exp(8/3),\,8(n-1)\}$.
\end{theorem}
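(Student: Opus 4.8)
The strategy is to squeeze information out of a single character sum. Put $S(x)=\sum_{1\le n\le x,\,(n,u)=1}\chi(n)$ for a carefully chosen $x$ (comparable to $X^*$, up to a controlled power of $\log p$), bound $|S(x)|$ from above by an explicit character-sum estimate, bound it from below by showing that the hypothesis forces a large positive main term, and compare.

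\emph{Reductions and the upper bound.} Since $p$ is prime and $\chi$ is nonprincipal, $\chi$ is primitive, so the P\'olya--Vinogradov inequality and, more to the point, the Burgess bound apply to $\sum_{n\le x}\chi(n)$; the coprimality constraint $(n,u)=1$ is removed by M\"obius inversion over $d\mid u$ at a cost of a factor $d(u)=2^{\,n-1}$, harmless since $n$ is bounded and $\log p>8(n-1)$. This gives, for a suitable integer $r\ge 2$,
\[
|S(x)|\ \le\ 2^{\,n-1}c_B(r)\,x^{1-1/r}p^{\frac{r+1}{4r^2}}(\log p)^{\frac1{2r}}\, .
\]

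\emph{The lower bound via a friability decomposition.} Write $x=H^{\tau}$ with $\tau\in(1,2)$; the decisive choice is $\tau$ near $1+\tfrac1n$ (matched with $r\approx n$), and this is the source of the factors $\bigl(\tfrac{n+1}{n}\bigr)^{n-1}$ and $e^{-1/n}$ in $X^*$. Because $\tau<2$, any $m\le x$ with $(m,u)=1$ either is $H$-friable --- so $\chi(m)=1$ by hypothesis --- or has a unique prime factor $q>H$, necessarily $q\nmid u$, in which case $m=qm'$ with $m'<\sqrt{x}\le H$, $(m',u)=1$, so $\chi(m)=\chi(q)$. Hence $S(x)=\Phi_u(x,H)+E$, where $\Phi_u(x,H)$ counts the $H$-friable $m\le x$ with $(m,u)=1$ and $E=\sum_{H<q\le x,\,q\nmid u}\chi(q)\,\#\{m'\le x/q:(m',u)=1\}$. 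For $\Phi_u$ I would use M\"obius inversion over $d\mid u$ together with an explicit lower bound for the friable counting function in terms of Dickman's $\rho$, valid once $\log x$ is large --- this is exactly what forces $X^*>3.8$ and $\log p>e^{8/3}$ --- and since $\tau\le2$ one has $\rho(\tau)=1-\log\tau$; the outcome is $\Phi_u(x,H)\ge\rho(\tau)\,x\prod_{q\mid u}(1-q^{-1})\bigl(1-O(1/\log p)\bigr)$, a genuine main term of order $x$.

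\emph{The secondary term and conclusion.} The crux is $E$: the trivial estimate $|E|\le\sum_{H<q\le x}\lfloor x/q\rfloor$ is, for $\tau$ in the useful range, not quite small enough to be swallowed by the main term, so it must be handled by splitting the range of $q$, re-applying the character-sum bound to the inner sums after one more elementary sieve step, and then integrating against explicit estimates for $\pi(t)$ --- this is where the factors $1+\tfrac{\sqrt2/2}{2B\log p-3}$ and $1-\tfrac{\pi^2}{9}\cdot\tfrac{\log X^*+9}{3X^*}$ of $g(n,p)$ come from, the ``$9$'' and ``$-3$'' being the explicit constants in the prime- and friable-counting error terms. Combining everything,
\[
\rho(\tau)\,x\prod_{q\mid u}(1-q^{-1})\bigl(1-O(1/\log p)\bigr)\ \le\ |S(x)|+|E|\ \le\ 2^{\,n-1}c_B(r)\,x^{1-1/r}p^{\frac{r+1}{4r^2}}(\log p)^{\frac1{2r}}\, ,
\]
and solving for $x$, substituting $x=H^{\tau}$ with the matched value of $\tau$ so that the $p$-power becomes exactly $p^{1/4}$, and collecting all explicit constants into $g(n,p)$, yields the asserted bound; the stated hypotheses on $X^*$ and $\log p$ are precisely the thresholds making every error estimate legitimate and $g(n,p)$ positive. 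I expect the main obstacle to be step three --- controlling $E$ with workable constants --- together with preventing the factors $2^{\,n-1}$ and $\prod_{q\mid u}(1-q^{-1})$ from spoiling the bound when $u$ has small prime divisors.
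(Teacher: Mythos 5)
Your proposal follows the Vinogradov--Pollack route (long character sum over $[1,x]$ with $x=H^{\tau}$, friable decomposition, Dickman's $\rho$), which is not the method of this paper and, more importantly, cannot produce the theorem as stated. The paper never forms the long sum $\sum_{n\le x}\chi(n)$ at all: it works with the Burgess moment $S(\chi,h,r)=\sum_{x=0}^{p-1}\bigl|\sum_{m=0}^{h-1}\chi(x+m)\bigr|^{2r}$ with $h=\lceil A\log p\rceil$ and $r=\lfloor B\log p\rfloor$, bounds it above by Trevi\~no's explicit moment estimate $S\le\sqrt2(2r/e)^rph^r+(2r-1)p^{1/2}h^{2r}$, and bounds it below by a Farey dissection: the hypothesis forces $\bigl|\sum_{m=0}^{h-1}\chi(z+m)\bigr|\ge h-2j$ on the disjoint intervals $\mathcal I^*(a,b),\mathcal J^*(a,b)$ with $u_1\mid a$, and the count of such intervals is controlled by Trevi\~no's bound $2x\sum_{a\le x}\phi(a)/a-\sum_{a\le x}\phi(a)\ge\frac{9}{\pi^2}x^2f(x)$. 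In particular your attributions of the pieces of $g(n,p)$ are wrong: $1-\frac{\pi^2}{9}\frac{\log X^*+9}{3X^*}$ is exactly $f(X^*)$ from that $\phi$-sum lemma (not a friable-counting error), and $1+\frac{\sqrt2}{2B\log p-3}$ is the ratio of the two terms in the moment bound after the calibration $(2B/(Ae))^{B\log p}=p^{-1/2}$ (not a prime-counting error). The exponent $p^{1/4}(\log p)^{(n+1)/2}$ is obtained precisely because $r\asymp\log p$; your ``suitable integer $r\ge2$'' matched with $r\approx n$ gives a Burgess threshold $p^{1/4+1/(4r)}$ and a constant $c_B(r)$ that is only explicit for small fixed $r$, so even if your argument closed, it would prove a differently shaped bound with different constants, not the stated inequality with the stated $g(n,p)$ and $X^*$.

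Beyond the mismatch of method, the step you yourself identify as the crux is genuinely broken. In $E=\sum_{H<q\le x}\chi(q)\,N_q$ you propose to ``re-apply the character-sum bound to the inner sums,'' but the oscillating factor is $\chi(q)$ over primes $q\in(H,x]$, about which the hypothesis says nothing (indeed $q_n$ itself may lie there), so no character-sum estimate applies; the only available bound is the trivial $|E|\le x-\Phi_u(x,H)$, which makes the main term $(2\rho(\tau)-1)x\prod_{q\mid u}(1-q^{-1})=(1-2\log\tau)x\prod_{q\mid u}(1-q^{-1})$ and hence requires $\tau<\sqrt e\approx1.649$. That is incompatible with your matching $\tau=1+\frac1n$ when $n=1$ (where $\tau=2$ and $\rho(2)=1-\log2<\tfrac12$), so the argument fails outright in the first case of the theorem. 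You would also need a fully explicit lower bound for $\Psi(x,x^{1/\tau})$ with concrete error terms and a treatment of the factor $\prod_{q\mid u}(1-q^{-1})$ when $u$ has prime factors exceeding $H$; none of these is supplied. If you want to prove the theorem as stated, you should instead follow the short-interval moment argument: derive the lower bound for $S(\chi,h,r)$ from Lemmas~\ref{Disjoint}--\ref{Trev}, compare with Lemma~\ref{upperbound}, and optimize $A=\frac{n}{n+1}e^{1/n}$, $B=\frac{n}{2(n+1)}$.
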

\begin{corollary}\label{C:1}
Fix two integer constants $p_0$ and $n_0$ such that $X^*(p_0,n_0)>3.8$ and $p_0>\max\{2\cdot 10^6,\exp(8(n-1))\}$.
Then there exists an explicit constant $C=g(n_0,p_0)$ such that
\[
  q_n\leq Cp^{\frac{1}{4}}(\log p)^{\frac{n+1}{2}}
\]
for all $p\geq p_0$ and $n\leq n_0$.
\end{corollary}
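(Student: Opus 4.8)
\emph{Proof strategy (a proposal).}
The substantive statement is Theorem~\ref{T:1}; Corollary~\ref{C:1} follows from it by a short monotonicity argument, which I describe at the end. I would prove Theorem~\ref{T:1} by a sieve argument built on explicit character-sum estimates, in the tradition of the least-nonresidue bounds of Burgess and Pollack, but arranged so that every constant is explicit. The starting point is the combinatorial content of the hypothesis: since $\chi(m)=1$ whenever $m\le H$ and $(m,u)=1$, and since $H<p$, complete multiplicativity of $\chi$ forces $\chi(n)=1$ for \emph{every} positive integer $n$, all of whose prime factors are $\le H$ and coprime to $u$. Writing $N(X)$ for the number of such ``residue-forced'' $n\le X$ and bounding the remaining terms by $\chi(n)\ge-1$, one gets the basic inequality
\[
  \sum_{n\le X}\chi(n)\ \ge\ 2N(X)-X ,
\]
valid for every $X$. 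For the sharp constants one keeps more of the structure of the complementary sum --- for $X$ not much larger than $H^2$, every non-forced $n\le X$ is $qm$ with $q$ a prime $>H$ and $m<H$, so that $\chi(m)=1$, and one estimates $\sum_{H<q}\chi(q)\lfloor X/q\rfloor$ rather than discarding it --- which is what lets the truncation be pushed up to the size $X^*$.

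The next task is a completely explicit lower bound for $N(X)$. The count of $H$-smooth integers up to $X$ is $\rho(\log X/\log H)\,X$ up to an explicit error, where $\rho$ is the Dickman--de~Bruijn function; this is the source of the constants $e$ and $2e$. Imposing the condition $(n,u)=1$ removes a factor $\prod_{q\mid u}(1-1/q)$, which one handles by inclusion--exclusion over the $\omega(u)=n-1$ prime divisors of $u$, and all of the $n$-dependence of the final bound --- the factor $((n+1)/n)^{n-1}$, the constant $B=\frac{n}{2(n+1)}$, and the extra half-power of $\log p$ contributed by each prime divisor of $u$ --- enters at this stage, on optimising the de~Bruijn/Rankin parameters. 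The side conditions are exactly what make this work: $\log p>8(n-1)$ keeps $\omega(u)$, hence $2^{\omega(u)}$ and $\prod_{q\mid u}(1-1/q)^{-1}$, small relative to the quantities being estimated, while $X^*>3.8$ and $\log p>e^{8/3}$ guarantee that the explicit smooth-number and sieve estimates are valid on the relevant range and that their main terms dominate their error terms.

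One then compares this lower bound with an explicit upper bound for the character sum. I would use an explicit P\'olya--Vinogradov inequality --- the source of the constants $\pi/3$ and $\pi^2/9$ --- possibly together with an explicit Burgess inequality whose parameter $r$ is allowed to depend on $p$; matching a lower bound of the shape $\gtrsim(\text{positive constant})\cdot X$ against the upper bound (of size $O(\sqrt p\log p)$, resp.\ the Burgess bound) forces $H$ to be small, and the optimal choice of truncation is exactly the quantity $X^*$, which is arranged so that $X^*$, raised to the power the argument produces, equals a constant multiple of the asserted bound on $H$. Carrying every constant through this optimisation yields $H\le p^{1/4}(\log p)^{(n+1)/2}g(n,p)$ with $g$ as stated; when Burgess is used, $r$ is taken growing with $p$ so that the loss $p^{1/(4r)}$ in the exponent degenerates into a power of $\log p$, which is why the exponent of $p$ remains $1/4$. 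The principal obstacle --- and the bulk of the work --- is precisely this bookkeeping: fusing the explicit smooth-number count, the inclusion--exclusion over the primes of $u$, and the character-sum inequality so that all constants stay of reasonable magnitude and the admissibility hypotheses come out as stated; the formula for $g(n,p)$ is essentially the ledger of this optimisation.

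Finally, Corollary~\ref{C:1} follows by applying Theorem~\ref{T:1} with $u=q_1q_2\cdots q_{n-1}$, the product of the first $n-1$ prime nonresidues of $\chi$. An easy induction --- applying Theorem~\ref{T:1} itself at each stage and using that its bound is $o(p)$ --- shows that $q_1,\dots,q_{n-1}$ are all less than $p$ once $p\ge p_0$, so $u$ is admissible with $\omega(u)=n-1$; taking $H:=q_n-1$, which satisfies the hypothesis, Theorem~\ref{T:1} gives $q_n-1\le p^{1/4}(\log p)^{(n+1)/2}g(n,p)$. Since $g(n,p)$ is nondecreasing in $n$ and nonincreasing in $p$ on the admissible range --- both checked directly from the formula --- one has $g(n,p)\le g(n_0,p_0)=C$ for all $n\le n_0$ and $p\ge p_0$, and the claimed bound $q_n\le Cp^{1/4}(\log p)^{(n+1)/2}$ follows.
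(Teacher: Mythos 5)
Your final paragraph --- the actual derivation of the corollary from Theorem~\ref{T:1} --- is correct and is exactly what the paper does: take $u=q_1\cdots q_{n-1}$, apply the theorem with $H=q_n-1$, and then check that $g(n,p)$ is increasing in $n$ and decreasing in $p$ (via the monotonicity of $X^*$ and of $f$) so that $g(n,p)\le g(n_0,p_0)$ on the admissible range. If the corollary is all that is being judged, your proposal matches the paper.

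However, the bulk of your proposal is a sketch of Theorem~\ref{T:1} itself, and that sketch takes a genuinely different route from the paper's --- one that I do not believe leads to the stated bound. You propose the Vinogradov-style argument: deduce $\chi(n)=1$ for all $H$-smooth $n$ coprime to $u$, count such $n$ via the Dickman--de Bruijn function, and play this against P\'olya--Vinogradov or Burgess. That machinery is what produces exponents of the shape $p^{1/(4\sqrt e)+\eps}$ (as in Pollack and Banks--Guo), and making the smooth-number counts explicit is exactly the part that is hard to do with reasonable constants. The paper deliberately avoids it: it works directly with the Burgess double sum $S(\chi,h,r)=\sum_x|\sum_{m<h}\chi(x+m)|^{2r}$, bounding it above by Trevi\~no's explicit inequality plus Stirling (Lemma~\ref{upperbound}) and below by locating $\gg X^2$ disjoint intervals $\mathcal{I}(a,b),\mathcal{J}(a,b)$ with $u_1\mid a$ on which the inner sum has modulus at least $h-2j$ (Lemmas~\ref{Disjoint}--\ref{Trev}, Proposition~\ref{Prop:S_lower}), then optimizes $h\approx A\log p$, $r\approx B\log p$. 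Consequently your attributions of the constants are wrong: $\pi/3$ and $\pi^2/9$ come from the $6/\pi^2$ density in $\sum_{a\le x}\phi(a)/a$ (Trevi\~no's Lemma~\ref{Trev}), not from P\'olya--Vinogradov; $\sqrt{2e}$ comes from Stirling applied to $(2r)!/(2^r r!)$ and the choice of $A,B$, not from the Dickman function; and the $(\log p)^{(n+1)/2}$ comes from $u_1\le (h-1)^k$ with $h\asymp\log p$, not from inclusion--exclusion over $u$. As a blueprint for actually proving the theorem with the stated $g(n,p)$, your sketch would need to be replaced wholesale by the Burgess-sum argument; as a derivation of the corollary from the theorem taken as given, it is fine.
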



\begin{table}[H]
\centering
\caption{Constants for various $p_0$ and $n_0$}
\label{my-label}
\begin{tabular}{|c||c|c|c|c|c|c|c|c|c|}
\hline
\diagbox{$n_0$}{$p_0$} & $10^7$ & $10^8$ & $10^9$ & $10^{10}$ & $10^{15}$ & $10^{20}$ & $10^{25}$ & $10^{30}$ & $10^{35}$ \\ \hline\hline
$1$ & $1.530$ & $1.433$ & $1.378$ & $1.344$ & $1.282$ & $1.264$ & $1.254$ & $1.248$ & $1.244$\\ \hline
$2$ & $2.408$ & $2.070$ & $1.909$ & $1.821$ & $1.692$ & $1.670$ & $1.661$ & $1.655$ & $1.651$\\ \hline
$3$ & $-$ & $7.170$ & $3.087$ & $2.468$ & $1.926$ & $1.876$ & $1.864$ & $1.858$ & $1.854$\\ \hline
$4$ & $-$ & $-$ & $-$ & $-$ & $2.230$ & $2.014$ & $1.987$ & $1.980$ & $1.976$\\ \hline
$5$ & $-$ & $-$ & $-$ & $-$ & $6.469$ & $2.198$ & $2.079$ & $2.063$ & $2.058$\\ \hline
$6$ & $-$ & $-$ & $-$ & $-$ & $-$ & $3.386$ & $2.205$ & $2.128$ & $2.116$\\ \hline
$7$ & $-$ & $-$ & $-$ & $-$ & $-$ & $-$ & $2.916$ & $2.222$ & $2.165$\\ \hline
$8$ & $-$ & $-$ & $-$ & $-$ & $-$ & $-$ & $-$ & $2.745$ & $2.240$\\ \hline
\end{tabular}
\end{table}

To our knowledge, the previous corollary constitutes the first explicit upper bound on $q_k$ when $k\geq 3$.
When $k=1$, there is the work of Norton (see~\cite{bib:Norton}) that was later superceded by Trevi\~no (see~\cite{bib:Trevino1})
and when $k=2$ there is a paper by McGown (see~\cite{bib:McGown1}).
The proof of our result involves a modification of McGown's work (see~\cite{bib:McGown1}), which is based on the method of Burgess (see~\cite{bib:Burgess1, bib:Burgess2}), and the adoption of Trevi\~no's results (see~\cite{bib:Trevino2}).



\section{Preparations}

\begin{lemma}\label{upperbound}
Let $\chi$ be a nonprincipal Dirichlet character to a prime modulus $p$. Let $h,r$ be positive integers
satisfying $h<p$ and $r\leq 9h$. Then
\[S(\chi,h,r)= \sum_{x=0}^{p-1} \left|\sum_{m=0}^{h-1} \chi(x+m) \right| ^{2r}\leq \sqrt{2}\left(\frac{2r}{e}\right)^rph^r+(2r-1)p^{\frac{1}{2}}h^{2r}\,.\]
\end{lemma}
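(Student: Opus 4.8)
The plan is to expand the $2r$-th power and count solutions of a congruence. Writing $\left|\sum_{m=0}^{h-1}\chi(x+m)\right|^{2r} = \sum_{\mathbf{m}}\chi\!\left(\prod_{i=1}^{r}(x+m_i)\right)\overline{\chi}\!\left(\prod_{j=1}^{r}(x+m_j')\right)$ where $\mathbf{m}=(m_1,\dots,m_r,m_1',\dots,m_r')$ ranges over $\{0,\dots,h-1\}^{2r}$, I would sum over $x$ and split according to whether the two tuples $\{m_i\}$ and $\{m_j'\}$ are equal as multisets. When they coincide, $\chi(\cdots)\overline{\chi}(\cdots)=|\chi(\cdots)|^2\le 1$ for each $x$, contributing at most $p$ times the number of such ``diagonal'' configurations, which is $\le r!\,h^r$ (choose the common multiset); this is where the $(2r/e)^r$ factor ultimately comes from, via $r!\le\sqrt{2\pi r}\,(r/e)^r$ and absorbing the polynomial factor — I expect the constant $\sqrt2$ and the clean exponential shape to be the outcome of a careful Stirling estimate here, possibly combined with a small amount of slack from $r\le 9h$. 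For the off-diagonal terms, I would invoke the standard Weil-type bound: for a polynomial $f$ that is not a perfect square (in the relevant sense, i.e. the two multisets differ), $\left|\sum_{x\bmod p}\chi(f(x))\right|\le (2r-1)p^{1/2}$, since $f$ has degree at most $2r$ with at most $2r$ distinct roots. Summing this over all $\le h^{2r}$ tuples gives the second term $(2r-1)p^{1/2}h^{2r}$.

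Carrying this out in order: (1) expand the modulus and interchange the order of summation to bring $\sum_x$ inside; (2) partition the $2r$-tuples into diagonal and non-diagonal classes; (3) bound the diagonal contribution by $p\cdot\#\{\text{diagonal tuples}\}$ and estimate that count, then apply Stirling to reach $\sqrt2\,(2r/e)^r p h^r$; (4) for the non-diagonal tuples, shift the variable $x\mapsto x-m_r'$ (say) so that the character sum is $\sum_x\chi(g(x))$ for a polynomial $g$ of degree $\le 2r$ that is not proportional to a square times a $(p-1)/\gcd$-th power, apply Weil's bound in the form $\le(2r-1)p^{1/2}$, and sum over the $\le h^{2r}$ choices; (5) add the two contributions. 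One subtlety is handling tuples where the multisets are unequal but the associated rational function $\prod(x+m_i)/\prod(x+m_j')$ is nonetheless a perfect power mod $p$ — for $h<p$ the linear factors are distinct mod $p$, so this cannot happen unless the multisets already agree, so these genuinely fall into the diagonal case and no extra care is needed beyond noting $h<p$.

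The main obstacle I anticipate is \emph{not} the structure of the argument, which is classical Burgess, but rather squeezing the constants into exactly the stated clean form $\sqrt2\,(2r/e)^r$ and $(2r-1)$. In particular, the diagonal count is not simply $r!\,h^r$ on the nose — one must be careful that the number of $x$ contributing for a fixed diagonal multiset is $p$ (not $p$ minus the number of roots), and that $r!\,h^r$ already overcounts enough to absorb the $\sqrt{2\pi r}$ from Stirling against the hypothesis $r\le 9h$ (or against a mild lower bound on $r$); I would check the edge cases $r=1,2$ by hand. For the off-diagonal bound, I would cite the version of Weil's theorem that gives the clean constant $(\deg f - 1)$ rather than $(\deg f)$, using that $g$ has at most $2r$ distinct roots but, being non-trivial, the sum is bounded by (number of distinct roots $-1)\,p^{1/2}\le(2r-1)p^{1/2}$. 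Assembling these yields the claimed inequality.
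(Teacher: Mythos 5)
There is a genuine gap in your key counting step. Before describing it, note that the paper does not reprove the Burgess moment estimate at all: its proof is two lines, quoting Theorem~1.1 of Trevi\~no, which gives $S(\chi,h,r)\le\frac{(2r)!}{2^rr!}ph^r+(2r-1)p^{1/2}h^{2r}$ under the hypothesis $r\le 9h$, and then checking via Robbins's explicit Stirling formula that $\frac{(2r)!}{2^rr!}\le\sqrt{2}\left(\frac{2r}{e}\right)^r$. You are attempting the full Burgess-type argument from scratch, which is more ambitious, and the outline is the classical one, but your partition of the tuples is wrong.

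The gap is this: you split the $2r$-tuples according to whether the multisets $\{m_i\}$ and $\{m_j'\}$ coincide, and you assert that the Weil bound applies to every tuple where they do not, on the grounds that for $h<p$ the function $f(x)=\prod_i(x+m_i)/\prod_j(x+m_j')$ cannot be a perfect power unless the multisets already agree. That assertion is false. The correct criterion for applying Weil is whether $f$ is a $d$-th power (up to a constant), where $d$ is the order of $\chi$. For a quadratic character take $r=2$, $m_1=m_2=a$, $m_1'=m_2'=b$ with $a\ne b$: then $f(x)=(x+a)^2/(x+b)^2$ is a perfect square, $\chi(f(x))=1$ whenever it is defined, and the sum over $x$ equals $p-2$, not $O(rp^{1/2})$. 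This tuple is off-diagonal in your sense, so your off-diagonal estimate fails for it. The correct bad set consists of tuples in which each value occurs with total multiplicity divisible by $d$ in the appropriate signed sense (worst case $d=2$: every value occurs an even number of times in the combined $2r$-tuple), and its size is bounded by the number of pairings of $2r$ slots times $h^r$, i.e.\ $\frac{(2r)!}{2^rr!}h^r=(2r-1)!!\,h^r$. It is exactly this quantity --- not your $r!\,h^r$ --- that Stirling converts into $\sqrt{2}\left(\frac{2r}{e}\right)^rh^r$; note $r!\,h^r\approx\sqrt{2\pi r}(r/e)^rh^r$ is smaller by a factor of roughly $2^r$, so your accounting could not even produce the stated constant, which literally encodes the tuples you discarded. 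If you replace your diagonal set by the correct ``$f$ is a $d$-th power'' set, bound its size by $(2r-1)!!\,h^r$ and its contribution per tuple trivially by $p$, and keep the Weil bound $(2r-1)p^{1/2}$ for the genuinely good tuples, the rest of your outline recovers the lemma.
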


\begin{proof}
From Theorem 1.1 of \cite{bib:Trevino1} we have
\[
S(\chi,h,r) \leq \frac{(2r)!}{2^rr!}ph^r+(2r-1)p^{\frac{1}{2}}h^{2r}
\,.
\]
Using the explicit version of Stirling's formula given in \cite{bib:Robbins}, we have
\begin{align*}
\frac{(2r)!}{2^rr!}\leq \frac{\sqrt{2\pi}(2r)^{2r+\frac{1}{2}}e^{-2r}e^{\frac{1}{24r}}}{2^r\sqrt{2\pi}r^{r+\frac{1}{2}}e^{-r}e^\frac{1}{12r+1}}\leq \sqrt{2}\left(\frac{2r}{e}\right)^r
\,.
\end{align*}
\end{proof}

\begin{definition}
Let $0 \leq b < a \leq X$ with $(a,b)=1$.
For constants $H$, $h$, define the following intervals:
\begin{align*}
&\mathcal{I}(a,b):=\left(\frac{bp}{a},\frac{bp+H}{a}\right],   
&\mathcal{I}(a,b)^*:=\left(\frac{bp}{a},\frac{bp+H}{a}-h+1\right]\,,\\
&\mathcal{J}(a,b):=\left[\frac{bp-H}{a},\frac{bp}{a}\right),   
&\mathcal{J}(a,b)^*:=\left[\frac{bp-H}{a},\frac{bp}{a}-h+1\right)
\,.
\end{align*}

\end{definition}

\begin{lemma}\label{Disjoint}
Let $X$ be a real number such that $2XH<p$. Then the intervals
$\mathcal{I}(a,b)$, $\mathcal{J}(a,b)$ where $0 \leq b < a \leq X$ and $(a,b)=1$ are disjoint subintervals of $(0,p-H)$, except for $\mathcal{J}(1,0)=[-H,0)$.
\end{lemma}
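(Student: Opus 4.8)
The plan is to treat the two assertions --- containment in $(0,p-H)$ and pairwise disjointness --- separately, reducing everything to two elementary facts: that $b\le a-1$ whenever $(a,b)=1$ and $0\le b<a$, and that distinct reduced fractions $b/a\neq b'/a'$ with denominators at most $X$ satisfy $|ab'-a'b|\ge 1$ (so in particular the pairs $(a,b)\mapsto b/a$ are all distinct). If $X<1$ there are no admissible pairs and the statement is vacuous, so we may assume $X\ge 1$ throughout; then $2XH<p$ gives at once $2H<p$, $aH<p$, and $(a+a')H<p$ for all $a,a'\le X$.

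For the containment I would first dispose of the case $b=0$: since $(a,0)=1$ forces $a=1$, the only intervals with $b=0$ are $\mathcal{I}(1,0)=(0,H]$ and $\mathcal{J}(1,0)=[-H,0)$, the latter being the stated exception and the former lying in $(0,p-H)$ because $2H<p$. For $b\ge 1$, using $b\le a-1$ one bounds the right endpoint of $\mathcal{I}(a,b)$ by $(bp+H)/a\le p-(p-H)/a$, which is $<p-H$ exactly when $p>(a+1)H$; this holds since $a+1\le 2X$. The left endpoints are positive since $bp/a\ge p/a>H>0$ and $(bp-H)/a\ge(p-H)/a>0$, and the right endpoint of $\mathcal{J}(a,b)$ is $bp/a\le p-p/a<p-H$. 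Hence every $\mathcal{I}(a,b)$ and every $\mathcal{J}(a,b)$ with $b\ge1$ sits inside $(0,p-H)$.

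The disjointness is the main point. The key device is the closed ``gap interval'' $G(a,b):=[(bp-H)/a,(bp+H)/a]$, of length $2H/a$ and centred at the Farey point $bp/a$. One checks immediately that $\mathcal{I}(a,b)\cup\mathcal{J}(a,b)\subseteq G(a,b)$ and that $\mathcal{I}(a,b)$ and $\mathcal{J}(a,b)$ are disjoint from one another (one consists of points strictly to the right of $bp/a$, the other of points strictly to the left). So it suffices to prove that the $G(a,b)$, over reduced fractions $b/a$ with $a\le X$, are pairwise disjoint. Given two such fractions with, say, $b/a<b'/a'$, I would compute
\[
  \frac{b'p-H}{a'}-\frac{bp+H}{a}=\frac{(ab'-a'b)\,p-(a+a')H}{aa'}\ge\frac{p-2XH}{aa'}>0,
\]
using $ab'-a'b\ge 1$, $a+a'\le 2X$, and the hypothesis $2XH<p$. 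Thus $G(a,b)$ lies strictly to the left of $G(a',b')$, so these are disjoint; combined with the previous observation this gives disjointness of all the $\mathcal{I}(a,b)$ and $\mathcal{J}(a,b)$, with $\mathcal{J}(1,0)$ disjoint from the rest either since $G(1,0)$ is disjoint from the other $G$'s or since all other intervals lie in $(0,p-H)$ while $\mathcal{J}(1,0)\subseteq[-H,0)$.

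I do not anticipate a serious obstacle; the only delicate point is the half-open nature of $\mathcal{I}$ and $\mathcal{J}$, which is precisely why I pass to the closed intervals $G(a,b)$ and establish the \emph{strict} separation above --- strictness there rules out any two of the original half-open intervals meeting at a shared endpoint.
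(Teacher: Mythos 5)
Your proof is correct and complete. The paper itself gives no argument here, deferring entirely to Lemma~2 of McGown's paper \cite{bib:McGown1}; your proof --- containment from $b\le a-1$ together with $(a+1)H\le 2XH<p$, and disjointness from the Farey-spacing estimate $|ab'-a'b|\ge 1$ combined with $2XH<p$, with the strict separation of the closed envelopes $G(a,b)$ handling the half-open endpoints --- is precisely the standard Burgess-type argument that the cited lemma uses.
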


\begin{proof}
This is Lemma 2 of \cite{bib:McGown1}.
\end{proof}

\begin{lemma}\label{Sum_chi}
Let $h,u_1,u_2\in\Z^+$ where $u_1=q_{1,1}q_{1,2}\ldots q_{1,k}$ with each prime $q_{1,i} < h$ and $u_2=q_{2,1}q_{2,2}\ldots q_{2,j}$ with each prime $h \leq q_{2,i}<p$. Suppose $\chi$ is Dirichlet character modulo~$p$ such that
$\chi (n) = 1$ for all $n\in (0,H]$ with $(n,u_1u_2) = 1$. If $z\in \mathcal{I}^*(a,b)\cup\mathcal{J}^*(a,b)$ with $(a,b) = 1$ and $u_1|a$, then 
\[
\left|\sum_{m=0}^{h-1} \chi(z+m)\right|\geq h- 2j 
\,.
\]
\end{lemma}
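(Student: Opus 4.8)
The plan is to count the lattice points $z+m$ for $m=0,1,\dots,h-1$ according to their coprimality to $u_1u_2$, and to show that $\chi$ evaluates to $1$ on all but at most $2j$ of them. First I would observe that since $z\in\mathcal I^*(a,b)\cup\mathcal J^*(a,b)$ with $u_1\mid a$, the interval $[z,z+h-1]$ has length $h-1$, and the whole window $[z,z+h]$ sits inside an interval of the form $\bigl(bp/a,(bp+H)/a\bigr]$ (or its $\mathcal J$ analogue), so multiplying by $a$ shows $az+am$ lies strictly between $bp$ and $bp+H$; hence $az+am\equiv r_m\pmod p$ for a unique $r_m\in(0,H]$, and in fact $r_m=az+am-bp$. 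This is the standard Burgess reflection trick used in McGown's Lemma, and I would cite or reproduce that computation.

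Next I would split the points into three classes. If $(z+m,u_1)>1$: since $u_1\mid a$ and $u_1$ is a product of primes each less than $h$, the residues of $z,z+1,\dots,z+h-1$ modulo any prime $q_{1,i}<h$ run through a full (or more than full) residue system, but actually the cleaner point is that $(z+m,q_{1,i})>1$ forces $q_{1,i}\mid z+m$; I claim these ``bad'' indices are harmless because $q_{1,i}\mid a$ too, so $q_{1,i}\mid az+am-bp$ would force $q_{1,i}\mid bp$, impossible as $q_{1,i}<h<p$ and $(a,b)=1$... more carefully: if $q_{1,i}\mid z+m$ then since $q_{1,i}\mid a$ we get $q_{1,i}\mid r_m=a(z+m)-bp$, but then $q_{1,i}\mid bp$; since $q_{1,i}<p$ this gives $q_{1,i}\mid b$, contradicting $(a,b)=1$. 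So in fact \emph{no} index $m$ has $(z+m,u_1)>1$, equivalently $(z+m,u_1)=1$ automatically. Then $(z+m,u_1u_2)=1$ iff $(r_m,u_2)=1$ (using $u_1\mid a$ again so $(r_m,u_1)=1$ as well), and for such $m$ the hypothesis on $\chi$ gives $\chi(z+m)=\chi^{?}$... here I must be careful: $\chi$ is mod $p$ and $\chi(z+m)$ depends on $z+m$ mod $p$, which equals $z+m$ itself since $0<z+m<p$; the hypothesis says $\chi(n)=1$ for $n\in(0,H]$ coprime to $u_1u_2$, but $z+m$ need not be $\le H$. The fix is that $\chi(a)\chi(z+m)=\chi(a(z+m))=\chi(bp+r_m)=\chi(r_m)$, and since $u_1\mid a$ we may choose $a$ so that... actually the slicker route: $\chi(z+m)=\chi(a)^{-1}\chi(r_m)$, and $r_m\in(0,H]$ with $(r_m,u_1u_2)=1$ gives $\chi(r_m)=1$, so $\chi(z+m)=\chi(a)^{-1}$, a fixed unimodular constant independent of $m$.

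The remaining points are those $m$ with $(r_m,u_2)>1$, i.e.\ some $q_{2,i}\mid r_m=a(z+m)-bp$. For a fixed prime $q_{2,i}$ (with $h\le q_{2,i}<p$), the congruence $a(z+m)\equiv bp\pmod{q_{2,i}}$ has exactly one solution $m$ modulo $q_{2,i}$ provided $q_{2,i}\nmid a$; and $q_{2,i}\nmid a$ because $q_{2,i}\ge h$ while all primes dividing $a$ that come from $u_1$ are $<h$ — wait, $a$ may have prime factors beyond those of $u_1$, so I instead note $q_{2,i}\nmid a$ would need handling: if $q_{2,i}\mid a$ then $q_{2,i}\mid bp$, forcing $q_{2,i}\mid b$ (as $q_{2,i}<p$), contradicting $(a,b)=1$. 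Good. So in the window of $h$ consecutive integers $z,\dots,z+h-1$, which has length $h-1<q_{2,i}$, at most one value of $m$ satisfies $q_{2,i}\mid r_m$. Actually a window of $h$ consecutive integers can contain at most $\lceil h/q_{2,i}\rceil\le 2$ multiples of the relevant residue class when $q_{2,i}\ge h$; so at most $2$ bad indices per prime $q_{2,i}$, hence at most $2j$ bad indices total. For the at least $h-2j$ good indices we have $\chi(z+m)=\chi(a)^{-1}$, so $\bigl|\sum_{m=0}^{h-1}\chi(z+m)\bigr|\ge (h-2j)\cdot 1-2j\cdot 1$... no — I want a lower bound on the sum, and the standard trick is $\bigl|\sum\bigr|\ge\bigl|\sum_{\text{good}}\bigr|-\bigl|\sum_{\text{bad}}\bigr|\ge(h-2j)|\chi(a)^{-1}|-2j=(h-2j)-2j$? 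That gives $h-4j$, too weak. The right bound: the good terms all equal the \emph{same} unit $\zeta=\chi(a)^{-1}$, so $\bigl|\sum_m\chi(z+m)\bigr|=\bigl|(h-(\#\text{bad}))\zeta+\sum_{\text{bad}}\chi(z+m)\bigr|\ge(h-\#\text{bad})-\#\text{bad}$ only if I use triangle inequality crudely; instead factor out $\zeta$: $\bigl|\sum_m\chi(z+m)\bigr|=\bigl|(h-\#\text{bad})+\zeta^{-1}\sum_{\text{bad}}\chi(z+m)\bigr|\ge (h-\#\text{bad})-\#\text{bad}$. Hmm, still $h-4j$ since $\#\text{bad}\le 2j$.

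So I need the bad count to be $\le j$, not $2j$. Reconsidering: a window of $h$ consecutive integers contains \emph{at most} $\lfloor (h-1)/q_{2,i}\rfloor+1$ elements of any arithmetic progression with common difference $q_{2,i}$; since $q_{2,i}\ge h$ we get $\lfloor(h-1)/q_{2,i}\rfloor=0$, so at most $1$ bad index per prime $q_{2,i}$, thus $\#\text{bad}\le j$. Then $\bigl|\sum_m\chi(z+m)\bigr|\ge (h-\#\text{bad})-\#\text{bad}\ge h-2j$, as claimed. \textbf{The main obstacle} I anticipate is precisely this bookkeeping: tracking that (i) $u_1\mid a$ forces every $z+m$ coprime to $u_1$, (ii) the value $\chi(z+m)$ on the good set is a single constant (via $\chi(a(z+m))=\chi(r_m)=1$), and (iii) each large prime $q_{2,i}$ kills at most one index because the window length $h-1$ is strictly less than $q_{2,i}$. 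Once those three points are nailed down, the triangle-inequality estimate $|\sum|\ge (h-\#\text{bad})\cdot 1-\#\text{bad}\cdot 1\ge h-2j$ follows immediately.
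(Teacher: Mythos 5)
Your proposal follows essentially the same route as the paper's proof: pass to $r_m=a(z+m)-bp\in(0,H]$, write $\chi(z+m)=\bar{\chi}(a)\chi(r_m)$ with $(r_m,u_1)=1$ automatic from $u_1\mid a$ and $(a,b)=1$, observe that each prime $q_{2,i}\geq h$ can divide $r_m$ for at most one $m$ in a window of $h$ consecutive integers, and conclude by the triangle inequality that the sum is at least $(h-j)-j=h-2j$. One harmless slip: your intermediate claim that $(z+m,u_1)=1$ for every $m$ is false (any $h$ consecutive integers contain multiples of a prime $q_{1,i}<h$), but the statement you actually need and do invoke is $(r_m,u_1)=1$, which holds because $q_{1,i}\mid a$ forces $r_m\equiv -bp\pmod{q_{1,i}}$ and $q_{1,i}\nmid bp$.
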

\begin{proof}
Suppose $z\in\mathcal{I}^*(a,b)$ with $(a,b)=1$ and $u_1|a$.
Then for all $0\leq n\leq h-1$, $z+n\in \mathcal{I}(a,b)$. Therefore $a(z+n)-bp \in (0,H]$. Since $\chi (z+n) = \bar{\chi} (a)\chi(a(z+n)-bp)$ and $a(z+n)-bp \in (0,H]$ and $(u_1,a(z+n)-bp)=1$, this will equal
$\chi(z+n)=\bar{\chi}(a)$
if $(u_2,a(z+n)-bp) = 1$.
This can only fail once for each divisor $q_{2,i}$ of $u_2$.\\ 
\\To see this, suppose $q_{2,i}\mid a(z+n_1)-bp$ and $q_{2,i}\mid a(z+n_2)-bp$ for two different values $n_1,n_2 \in [0,h-1]$. Since $(a,b)=1$ we also know $q_{2,i}\nmid a$. We have $q_{2,i}\mid a(z+n_1)-bp-[a(z+n_2)-bp]$ and thus $q_{2,i}\mid(n_1-n_2)$. Now we have $h \leq q_{2,i} \leq n_1-n_2 \leq h-1$, which is a contradiction.

Application of the triangle inequality now gives the result; indeed, we have
\[
  \left|\sum_{m=0}^{h-1}\chi(z+m)\right|\geq\left|\sum_{m=0}^{h-1}\bar{\chi}(a)\right|
  -
  \sum_{m=0}^{h-1}\left|\chi(z+m)-\bar{\chi}(a)\right|
  \geq
  h-
  2j
  \,.
\]
The proof when $z\in\mathcal{J}^*(a,b)$ is similar.  In this case we have
$-a(z+n)+bp\in(0,H]$ which implies $\chi(z+n)=\bar{\chi}(-a)$
if $(u_2, a(z+n)-bp)=1$.  The result follows as before.
\end{proof}

\begin{lemma}\label{Trev}
Let $x>1$ be a real number. Then
\[2x\sum_{a \leq x}\frac{\phi(a)}{a}-\sum_{a \leq x}\phi(a) \geq \frac{9}{\pi^2}x^2f(x)\]
where \[f(x)=1-\frac{\pi^2}{9}\frac{\log x+9}{3x}
\,.
\]
\end{lemma}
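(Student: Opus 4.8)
The plan is to bound the two sums separately using standard estimates for $\sum_{a\le x}\phi(a)$ and $\sum_{a\le x}\phi(a)/a$, then combine them so that the main terms produce the leading coefficient $\tfrac{3}{\pi^2}x^2$ in each sum (giving $\tfrac{6}{\pi^2}x^2 - \tfrac{3}{\pi^2}x^2 = \tfrac{3}{\pi^2}x^2$ after the combination $2x\cdot\tfrac{3}{\pi^2}x - \tfrac{3}{\pi^2}x^2$); wait, this already shows the constant should be $\tfrac{3}{\pi^2}$, so I will need to track the error terms carefully to see how the stated $\tfrac{9}{\pi^2}$ arises — in fact, rewriting, $2x\sum\phi(a)/a \approx 2x\cdot\tfrac{6}{\pi^2}x = \tfrac{12}{\pi^2}x^2$ and $\sum\phi(a)\approx\tfrac{3}{\pi^2}x^2$, so the difference is $\tfrac{9}{\pi^2}x^2$, which matches. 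So the first step is to record the elementary bounds $\sum_{a\le x}\phi(a) \le \tfrac{3}{\pi^2}x^2 + O(x\log x)$ (an upper bound, since it is subtracted) and $\sum_{a\le x}\phi(a)/a \ge \tfrac{6}{\pi^2}x + O(\log x)$ (a lower bound, since it is added with a positive coefficient), each with fully explicit error terms — these are exactly the kind of estimates supplied by Trevi\~no's paper \cite{bib:Trevino2}, which the introduction flags as a key input.

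First I would invoke, or re-derive, an explicit upper bound of the shape $\sum_{a\le x}\phi(a) \le \tfrac{3}{\pi^2}x^2 + c_1 x\log x + c_2 x$ and an explicit lower bound $\sum_{a\le x}\phi(a)/a \ge \tfrac{6}{\pi^2}x - c_3\log x - c_4$, valid for all real $x>1$, with explicit constants $c_1,\dots,c_4$. These follow by writing $\phi(a) = \sum_{d\mid a}\mu(d)\tfrac{a}{d}$, swapping the order of summation, and estimating $\sum_{d\le x}\mu(d)/d^2$ and $\sum_{d\le x}\mu(d)/d$ against $\tfrac{1}{\zeta(2)} = \tfrac{6}{\pi^2}$ with explicit tails, together with the bound $\sum_{m\le y}m \le \tfrac{1}{2}y^2 + \tfrac{1}{2}y$ and $\sum_{m\le y}1 \le y$. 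The next step is to substitute these into the left-hand side:
\[
2x\sum_{a\le x}\frac{\phi(a)}{a} - \sum_{a\le x}\phi(a)
\ge 2x\!\left(\tfrac{6}{\pi^2}x - c_3\log x - c_4\right) - \left(\tfrac{3}{\pi^2}x^2 + c_1 x\log x + c_2 x\right)
= \tfrac{9}{\pi^2}x^2 - (2c_3 + c_1)x\log x - (2c_4 + c_2)x.
\]
Then I would factor out $\tfrac{9}{\pi^2}x^2$ to obtain
\[
\tfrac{9}{\pi^2}x^2\!\left(1 - \tfrac{\pi^2}{9}\,\frac{(2c_3+c_1)\log x + (2c_4+c_2)}{x}\right),
\]
and the claim reduces to checking that the constants coming out of the explicit estimates satisfy $2c_3 + c_1 \le 1$ and $2c_4 + c_2 \le 9$, so that the bracket dominates $1 - \tfrac{\pi^2}{9}\tfrac{\log x + 9}{3x} = f(x)$ (note the $\tfrac13$ in $f$ absorbs a factor, so the true requirement is $2c_3+c_1 \le \tfrac13$ and $2c_4+c_2\le 3$, or some combination that makes $(2c_3+c_1)\log x + (2c_4+c_2) \le \tfrac13(\log x + 9)$ for all $x>1$).

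The main obstacle I anticipate is the bookkeeping on the explicit error constants: one needs the tail estimates for $\sum_{d>x}\mu(d)/d$ and $\sum_{d>x}\mu(d)/d^2$ to be sharp enough that, after doubling one of them and adding, the combined coefficient of $\log x$ stays below $\tfrac13$ and the combined constant stays below $3$, uniformly for $x>1$ (the regime $x$ close to $1$ is the delicate one, since there $\log x$ is small but the constant term must still not overwhelm $\tfrac{9}{\pi^2}x^2$ — indeed for $x$ near $1$, $f(x)$ is very negative, so the inequality is nearly vacuous there and the real content is the large-$x$ behavior). In practice this is precisely the content of the relevant lemma in Trevi\~no \cite{bib:Trevino2}, so the cleanest route is to quote that result directly; if a self-contained derivation is wanted, the Dirichlet-hyperbola-free argument above with Abel summation against $\sum_{n\le x}\mu(n)/n = O(1)$ gives everything, and verifying the two numerical inequalities on $c_1,\dots,c_4$ is then routine.
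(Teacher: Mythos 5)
The paper disposes of this lemma in one line: it is quoted as Lemma 3.2 of Trevi\~no's \emph{The least $k$-th power non-residue}, i.e.\ reference \cite{bib:Trevino1} --- not \cite{bib:Trevino2}, as you indicate; the paper cites two different Trevi\~no articles and this lemma comes from the other one. Your fallback option of quoting Trevi\~no directly is therefore exactly what the authors do, and to that extent your proposal is acceptable. Your identification of the main terms ($\tfrac{12}{\pi^2}x^2-\tfrac{3}{\pi^2}x^2=\tfrac{9}{\pi^2}x^2$) and your reduction of the problem to the numerical conditions $2c_3+c_1\le\tfrac13$ and $2c_4+c_2\le 3$ are both correct.

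The gap is precisely at the step you dismiss as ``routine.'' The M\"obius-swap estimates you describe cannot produce constants satisfying $2c_3+c_1\le\tfrac13$. For $\sum_{a\le x}\phi(a)=\sum_{d\le x}\mu(d)\sum_{m\le x/d}m$, bounding the inner sum termwise by $\tfrac12 y^2+\tfrac12 y$ contributes an error of at least $\tfrac{x}{2}\sum_{d\le x}\tfrac1d$, so this method yields $c_1\approx\tfrac12$ at best --- already exceeding $\tfrac13$ before the other sum is even touched. Likewise $\sum_{a\le x}\phi(a)/a=\sum_{d\le x}\tfrac{\mu(d)}{d}\lfloor x/d\rfloor$ carries an error term $\sum_{d\le x}\tfrac{\mu(d)}{d}\{x/d\}$ whose trivial bound is $\approx\log x$, giving $c_3\approx 1$ and hence $2c_3+c_1\approx\tfrac52$, nearly an order of magnitude too large. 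Abel summation against $|\sum_{n\le x}\mu(n)/n|\le 1$ does not repair this, since the fractional-part weights are not monotone. To reach the coefficient $\tfrac13$ one must exploit the sign cancellation in $\mu$, which in explicit form requires nontrivial inputs (e.g.\ El Marraki/Ramar\'e-type bounds on $\sum_{n\le x}\mu(n)/n$) typically combined with direct verification of the inequality for $x$ up to a large threshold; this is the substance of Trevi\~no's proof and is genuinely more than bookkeeping. So either cite Trevi\~no's Lemma 3.2 from the correct reference, as the paper does, or expect the self-contained route to require substantially sharper tools than your sketch provides.
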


\begin{proof}
This is Lemma 3.2 of \cite{bib:Trevino1}.
\end{proof}

\begin{proposition}\label{Prop:S_lower}
Let $h,r,u_1,u_2\in\Z^+$ with $u_1=q_{1,1}q_{1,2}\ldots q_{1,k}$ with each prime $q_{1,i} < h$ and $u_2 = q_{2,1}q_{2,2} \ldots q_{2,j}$ with each $h\leq q_{2,i} <p$. Suppose $\chi$ is a Dirichlet character modulo a prime~$p$ such that  $\chi (n) = 1$ for all $n\in[1,H]$ satisfying $(n,u_1u_2) = 1$. Assume $2h<H<(hp)^{\frac{1}{2}}$ and set $X:=\frac{H}{2h}>1$. Then 
\begin{align*}
S(\chi,h,r)\geq \frac{18}{\pi^2}h(h-2j)^{2r}\frac{\phi(u_1)}{u_1^2}X^2f\left(\frac{X}{u_1}\right)
\,,
\end{align*}
provided $X/u_1>1$.
\end{proposition}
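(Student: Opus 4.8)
The idea is to bound $S(\chi,h,r)$ from below by restricting the outer sum over $x \in \{0,\dots,p-1\}$ to those $x$ lying in one of the starred intervals $\mathcal I^*(a,b)$ or $\mathcal J^*(a,b)$ with $u_1 \mid a$, where Lemma \ref{Sum_chi} forces $\left|\sum_{m=0}^{h-1}\chi(x+m)\right| \geq h-2j$. Writing $a = u_1 a'$, the admissible denominators are $a' \leq X/u_1$ (since we need $a \leq X$, with $X = H/(2h)$), and for each such $a$ the numerators $b$ range over residues coprime to $a$; but because $u_1 \mid a$, coprimality of $b$ to $a$ already forces $(b,u_1)=1$, and we may actually take $b$ to range over $0 \le b < a$ with $(b,a')=1$ — this is the point where the factor $\phi(u_1)/u_1^2$ rather than $\phi(u_1)/u_1$ will emerge, so I will need to be careful about exactly which $b$ are allowed. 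Lemma \ref{Disjoint} (applied with the hypothesis $2XH<p$, which follows from $H<(hp)^{1/2}$ since then $2XH = H^2/h < p$) guarantees these intervals are pairwise disjoint subintervals of $(0,p)$, so the contributions add without overlap.

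\medskip
Next I count lattice points. Each starred interval $\mathcal I^*(a,b)$ has length $\frac{H}{a} - h + 1$, hence contains at least $\frac{H}{a} - h$ integers, i.e.\ at least $\frac{H}{a}-h = \frac{2hX}{a}-h$ integers; the same bound holds for $\mathcal J^*(a,b)$. Thus the total count of valid $x$, summed over $a = u_1 a'$ with $a' \le X/u_1$ and over the appropriate $b$ with $(b,a')=1$ (there being $\phi(a')$ choices of $b$ in a range of length $a = u_1 a'$, giving the $1/u_1$ saving), is at least
\[
2\sum_{a' \le X/u_1} \phi(a')\left(\frac{2hX}{u_1 a'} - h\right)
= 2h\!\left(\frac{2X}{u_1}\sum_{a' \le X/u_1}\frac{\phi(a')}{a'} - \sum_{a' \le X/u_1}\phi(a')\right).
\]
Now apply Lemma \ref{Trev} with $x = X/u_1 > 1$ to bound the parenthesised expression below by $\frac{9}{\pi^2}\,(X/u_1)^2 f(X/u_1)$. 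Multiplying the lattice-point count by the per-point lower bound $(h-2j)^{2r}$ gives
\[
S(\chi,h,r) \;\ge\; 2h \cdot \frac{9}{\pi^2}\,\frac{X^2}{u_1^2}\,f\!\left(\frac{X}{u_1}\right)(h-2j)^{2r}
\;=\; \frac{18}{\pi^2}\,h\,(h-2j)^{2r}\,\frac{\phi(u_1)}{u_1^2}\,X^2 f\!\left(\frac{X}{u_1}\right),
\]
once one checks that the bookkeeping produces $\phi(u_1)/u_1^2$ and not merely $1/u_1^2$ — this extra $\phi(u_1)$ comes from only needing $b$ coprime to $a'$ (equivalently, $b$ ranging over a $\phi(u_1)$-fraction of residues mod $u_1$ is \emph{not} required, which is precisely why the stated bound carries $\phi(u_1)$ in the numerator; I should track this factor explicitly rather than hand-wave it).

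\medskip
The main obstacle is the combinatorial bookkeeping of the preceding paragraph: getting the exact constant $\frac{18}{\pi^2}$ and the exact arithmetic factor $\frac{\phi(u_1)}{u_1^2}$ right, which requires being precise about (i) whether each integer in a starred interval is counted once or could be double-counted across the $\mathcal I$ vs.\ $\mathcal J$ families (Lemma \ref{Disjoint} handles this, modulo the harmless $\mathcal J(1,0)$ exception, which we exclude since we need $a' \geq 1$ and $b$ with $(b,a')=1$, and in any case contributes to $x \in [-H,0)$, outside $\{0,\dots,p-1\}$), (ii) the floor-function loss in passing from interval length to integer count, which is absorbed by writing $\lfloor \frac{H}{a}-h+1\rfloor \ge \frac{H}{a}-h$, and (iii) ensuring the denominators $a = u_1 a'$ indeed satisfy $a \le X$ so that the intervals are well-defined and disjoint, which is exactly the hypothesis $X/u_1 > 1$ together with $X = H/(2h)$. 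Everything else is a direct substitution.
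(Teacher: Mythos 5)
Your overall strategy coincides with the paper's: restrict the outer sum to the starred intervals with $u_1\mid a$ and apply Lemma~\ref{Disjoint} (via $2XH=H^2/h<p$), bound each retained term below by $(h-2j)^{2r}$ using Lemma~\ref{Sum_chi}, count at least $2(H/a-h)$ integers per pair $(a,b)$, and finish with Lemma~\ref{Trev}. The genuine gap is in the one step you flagged but then resolved incorrectly: the count of admissible numerators $b$ and the origin of the factor $\phi(u_1)$. For $a=u_1a'$ the admissible $b$ are those with $0\le b<a$ and $(a,b)=1$; you cannot relax this to $(b,a')=1$. Both Lemma~\ref{Disjoint} and Lemma~\ref{Sum_chi} are stated for $(a,b)=1$, and the latter genuinely fails without it: if some prime $q_{1,i}\mid u_1$ divides both $a$ and $b$, then $q_{1,i}\mid a(z+n)-bp$ for \emph{every} $n$, so the hypothesis $\chi(m)=1$ for $(m,u_1u_2)=1$ never applies and the lower bound $h-2j$ is lost. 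The correct count is $\#\{b\}=\phi(a)=\phi(u_1a')$, and the factor $\phi(u_1)$ enters through the superadditivity $\phi(u_1a')\ge\phi(u_1)\phi(a')$ — not through ``only needing $b$ coprime to $a'$.''

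As written, your lattice-point count uses only $\phi(a')$ numerators per modulus, which after Lemma~\ref{Trev} yields
$\frac{18}{\pi^2}h(h-2j)^{2r}\frac{1}{u_1^2}X^2f\left(\frac{X}{u_1}\right)$;
your final display then silently replaces $1/u_1^2$ by $\phi(u_1)/u_1^2$, an ``equality'' that is false whenever $\phi(u_1)>1$. Since $\phi(u_1)\ge 1$, the bound you actually establish is strictly \emph{weaker} than the proposition in the relevant regime $f(X/u_1)>0$, and the lost factor matters downstream: the proof of Theorem~\ref{T:1} bounds $\phi(u_1)/u_1^2$ below by $\left(\frac{h-2}{(h-1)^2}\right)^k$, which would degrade to $(h-1)^{-2k}$ without it. The repair is one line — write the sum over $a=tu_1$ with $\phi(tu_1)$ numerators and apply $\phi(tu_1)\ge\phi(t)\phi(u_1)$ before pulling $\phi(u_1)$ out — but as it stands the proposal does not prove the stated inequality.
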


\begin{proof}
First, observe that by Lemma~\ref{Disjoint} we have
\begin{align*}
S(\chi,h,r) &= \sum_{x=0}^{p-1} \left|\sum_{m=0}^{h-1} \chi(x+m) \right|^{2r} \\
&\geq \sum_{\substack{0\leq b< a\leq X \\ (a,b) = 1\\u_1|a}}
\sum_{z\in\mathcal{I}^*(a,b)\cup\mathcal{J}^*(a,b)} \left|\sum_{m=0}^{h-1}\chi(z+m) \right|^{2r} 
\,.
\end{align*}\\
\textrm{Applying Lemma~\ref{Sum_chi}, and noting that $\mathcal{I}^*(a,b)\cup\mathcal{J}^*(a,b)$ has at least $2(\frac{H}{a} - h)$ elements, we obtain}
\begin{align*}
S(\chi,h,r)&\geq\sum_{\substack{0\leq b <a\leq X \\ (a,b) = 1\\u_1|a}} 2\left(\frac{H}{a} - h\right) (h-2j)^{2r} 
\\
&=2(h-2j)^{2r} \sum_{\substack{1\leq a\leq X\\u_1|a}} \left(\frac{H}{a} - h\right)\phi (a)\\
&=2(h-2j)^{2r} \sum_{t\leq \frac{X}{u_1}} \left( \frac{H}{tu_1} - h\right) \phi(tu_1)\\
&\geq 2(h-2j)^{2r} \sum_{t\leq \frac{X}{u_1}}\left( \frac{H}{tu_1} - h\right) \phi(t)\phi(u_1)\\
&=2(h-2j)^{2r}\phi(u_1)\left(\frac{H}{u_1}\sum_{t\leq \frac{X}{u_1}}\frac{\phi(t)}{t} - h\sum_{t\leq \frac{X}{u_1}}\phi(t)\right)
\,.
\end{align*}
Replacing $H$ with $2Xh$ the above is equal to
\[
2(h-2j)^{2r}h\phi(u_1)\left(\frac{2X}{u_1}\sum_{0\leq t\leq\frac{X}{u_1}}\frac{\phi(t)}{t} - \sum_{0\leq t\leq\frac{X}{u_1}}\phi(t)\right)
\,.
\]
Now we may apply Lemma \ref{Trev} to conclude
\begin{align*}
S(\chi,h,r) \geq \frac{18}{\pi^2}h(h-2j)^{2r}\frac{\phi(u_1)}{u_1^2}X^2f\left(\frac{X}{u_1}\right)
\,.
\end{align*}
\end{proof}
\begin{lemma}\label{convexity}
Let $h$ and $r$ be positive integers with $j \leq h/8$. Then
\[
\left(\frac{h}{h-2j}\right)^{2r} \leq \exp\left(\frac{16rj}{3h}\right)
\,.
\]
\end{lemma}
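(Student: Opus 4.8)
The statement to prove is Lemma~\ref{convexity}: for positive integers $h,r$ with $j\le h/8$, one has $\left(\frac{h}{h-2j}\right)^{2r}\le\exp\!\left(\frac{16rj}{3h}\right)$. The plan is to take logarithms and reduce to a one-variable inequality. Taking $\log$ of both sides, it suffices to show
\[
2r\log\!\left(\frac{h}{h-2j}\right)\le\frac{16rj}{3h}\,,
\]
and since $r>0$ we may cancel $r$ and instead prove
\[
\log\!\left(\frac{h}{h-2j}\right)=-\log\!\left(1-\frac{2j}{h}\right)\le\frac{8j}{3h}\,.
\]

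Now I would set $t=2j/h$, so that the hypothesis $j\le h/8$ becomes $0\le t\le 1/4$, and the target inequality becomes $-\log(1-t)\le\frac{4t}{3}$ on $[0,1/4]$. The natural approach is a convexity/monotonicity argument on $\phi(t):=\frac{4t}{3}+\log(1-t)$: we have $\phi(0)=0$, and $\phi'(t)=\frac{4}{3}-\frac{1}{1-t}$, which is nonnegative exactly when $1-t\ge 3/4$, i.e. $t\le 1/4$. Hence $\phi$ is nondecreasing on $[0,1/4]$, so $\phi(t)\ge\phi(0)=0$ there, which is precisely $-\log(1-t)\le\frac{4t}{3}$. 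Substituting back $t=2j/h$ and multiplying by $2r$ gives the claim. (Alternatively one could invoke the elementary bound $-\log(1-t)\le t/(1-t)\le\frac{t}{3/4}=\frac{4t}{3}$ for $t\le 1/4$, using $\log(1-t)\ge -t/(1-t)$, which is the fastest route and avoids calculus altogether.)

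There is essentially no obstacle here; the only thing to be careful about is the direction of the inequality when exponentiating and the bookkeeping of the substitution $t=2j/h$ together with tracking the factor $2r$ versus $r$. One should also note the degenerate case $j=0$, where both sides equal $1$, is covered. I would present the second (purely algebraic) route: start from $h-2j\ge\frac{3h}{4}>0$ (valid since $j\le h/8$), write $\frac{h}{h-2j}=1+\frac{2j}{h-2j}$, apply $\log(1+x)\le x$ to get $\log\frac{h}{h-2j}\le\frac{2j}{h-2j}\le\frac{2j}{3h/4}=\frac{8j}{3h}$, and finish by multiplying through by $2r$ and exponentiating.
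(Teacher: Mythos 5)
Your proposal is correct, and the algebraic route you say you would present is exactly the paper's proof: both use $\log t\leq t-1$ (equivalently $\log(1+x)\leq x$) to get $\log\frac{h}{h-2j}\leq\frac{2j}{h-2j}\leq\frac{8j}{3h}$ and then multiply by $2r$. The calculus variant you sketch is a harmless alternative but adds nothing over the paper's argument.
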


\begin{proof}
By the convexity of the logarithm, we know $\log t \leq t-1$ for all $t$.
It follows that, for $j\leq h/8$,
\[\log\left(\frac{h}{h-2j}\right) \leq \frac{h}{h-2j}-1 = \frac{2j}{h-2j}\leq\frac{8j}{3h}\]
which implies
\[2r\log\left(\frac{h}{h-2j}\right) \leq \frac{16rj}{3h}\,,\]
and therefore
\[\left(\frac{h}{h-2j}\right)^{2r} \leq \exp\left(\frac{16rj}{3h}\right)\,.\]
\end{proof}

%


\section{Proof of the main result}

\begin{proof}[Proof of Theorem~\ref{T:1}]
The conditions $X^*>3.8$ and $p>403$ guarantee that all the denominators in the expression for $g(n,p)$ are positive.
Notice also that the condition $\log p>8(n-1)$ implies that, in particular, $n\leq \frac{1}{4}\log p$.

Let $h = \lceil A \log p \rceil$ and $r = \lfloor B \log p \rfloor$ with $A=\frac{n}{n+1}\exp\left(\frac{1}{n}\right)$ and $B=\frac{n}{2(n+1)}$.
One verifies that $\log{p}>e^\frac{16B}{3A}$ is also satisfied.
Indeed, $\frac{2B}{A}=e^{-1/n}\leq 1$ and therefore the condition $\log p>\exp(8/3)$ suffices.

Write $X=H/(2h)$. Let $u_1$ be composed of the $k$ distinct prime factors of $u$ less than $h$ and let $u_2$ be composed of the $j$ prime factors greater than or equal to $h$ as in Proposition~\ref{Prop:S_lower}, so $j+k=n-1$. Note that our choices of $r$, $h$ will allow us to apply Lemma~\ref{convexity},
and moreover, we have $A = \frac{2B}{e}\exp\left(\frac{1}{2B}\right)$
from which it follows that
\begin{equation}\label{E:old.lemma}
\left(\frac{2B}{Ae}\right)^{B\log p}= p^{-\frac{1}{2}}
\,;
\end{equation}
both of these facts will be employed forthwith.


We may assume that $H>\frac{\pi}{3}\sqrt{2e}\left(\frac{n}{n+1}\right)p^{\frac{1}{4}}(\log{p})^\frac{n+1}{2}$ or there would be nothing to prove. Using this and $u_1\leq(h-1)^k$, we get $X^*$ as a lower bound for $\frac{X}{u_1}$;
indeed,
\begin{align*}
\frac{X}{u_1}
&\geq
\frac{\frac{\pi}{3}\sqrt{2e}\left(\frac{n}{n+1}\right)p^{\frac{1}{4}}(\log p)^{\frac{n+1}{2}}}{2h(h-1)^k}
\\
&\geq
\frac{\frac{\pi}{3}\sqrt{2e}\left(\frac{n}{n+1}\right)p^{\frac{1}{4}}(\log p)^{\frac{n+1}{2}}(1-h^{-1})}{2(h-1)^{k+1}}
\\
&\geq
\frac{\frac{\pi}{3}\sqrt{2e}\left(\frac{n}{n+1}\right)p^{\frac{1}{4}}(\log p)^{\frac{n+1}{2}}
\left(1-\frac{n+1}{n}e^{-1/n}(\log p)^{-1}\right)}
{2\left(\frac{n}{n+1}e^{1/n}\log p\right)^n}
\\
&=
\frac{\pi}{3}\frac{1}{\sqrt{2e}}
\frac{\left(\frac{n+1}{n}\right)^{n-1}p^{\frac{1}{4}}}{(\log p)^{\frac{n-1}{2}}}
\left(1-\frac{n+1}{n}e^{-1/n}(\log p)^{-1}\right)
\,.
\end{align*}

Using the upper and lower bounds for $S(\chi,h,r)$
given in Lemma~\ref{upperbound} and Proposition~\ref{Prop:S_lower} respectively,
we find
\begin{align*}
\frac{18}{\pi^2}h(h-2j)^{2r}\frac{\phi(u_1)}{u_1^2}\left(\frac{H}{2h}\right)^2f\left(\frac{X}{u_1}\right) &\leq (2r-1)p^{\frac{1}{2}}h^{2r}\left(1+\frac{\sqrt{2}}{2r-1}\left(\frac{2r}{he}\right)^rp^{\frac{1}{2}}\right)
\end{align*}
which implies
\begin{align*}
\frac{18}{\pi^2}\frac{\phi(u_1)}{u_1^2}H^2f\left(\frac{X}{u_1}\right)&\leq 4h(2r-1)\left(\frac{h}{h-2j}\right)^{2r}p^{\frac{1}{2}}\left(1+\frac{\sqrt{2}}{2r-1}\left(\frac{2r}{he}\right)^rp^{\frac{1}{2}}\right) \\
&\leq 4(h-1)(2r)e^{\frac{16rj}{3h}}p^{\frac{1}{2}}\left(1+\frac{\sqrt{2}}{2r-1}\left(\frac{2r}{he}\right)^rp^{\frac{1}{2}}\right)
\,.
\end{align*}
Substituting our values for $h$ and $r$ and using the fact $\log{p}\geq e^{\frac{16B}{3A}}$,
together with~(\ref{E:old.lemma}), gives us
\begin{align*}
\frac{18}{\pi^2}\frac{\phi(u_1)}{u_1^2}H^2f\left(\frac{X}{u_1}\right)&\leq 4(A \log p)(2B\log p)e^{\frac{16Bj}{3A}}p^{\frac{1}{2}}\left(1+\frac{\sqrt{2}}{2B\log p-3}\left(\frac{2B}{Ae}\right)^{B\log p}p^\frac{1}{2}\right) \\
&= 4(A\log{p})(2B\log{p})e^{\frac{16Bj}{3A}}p^{\frac{1}{2}}\left(1+\frac{\sqrt{2}}{2B\log{p}-3}\right) \\
&\leq 8AB(\log{p})^2p^{\frac{1}{2}}(\log{p})^j\left(1+\frac{\sqrt{2}}{2B\log{p}-3}\right)
\,.
\end{align*}

Since $u_1$ is composed of $k$ prime factors strictly less than $h$, we have
\[
\frac{\phi(u_1)}{u_1^2}\geq \left(\frac{h-2}{(h-1)^2}\right)^k \geq \left(\frac{A\log{p}-2}{(A\log{p})^2}\right)^k
\,.
\]
Using this, and substituting our values of $A$ and $B$, we find
\begin{align*}
\frac{18}{\pi^2}H^2f\left(\frac{X}{u_1}\right) &\leq 8AB(\log{p})^{2}(\log{p})^jp^{\frac{1}{2}}\left(\frac{(A\log{p})^2}{A\log{p}-2}\right)^k\left(1+\frac{\sqrt{2}}{2B\log{p}-3}\right) \\
&\leq 8A^{k+1}B(\log{p})^{k+2}(\log p)^jp^{\frac{1}{2}}\left(\frac{A\log{p}}{A\log{p}-2}\right)^k\left(1+\frac{\sqrt{2}}{2B\log{p}-3}\right)\\
&\leq 8\left(\frac{n}{n+1}e^\frac{1}{n}\right)^{n}\frac{n}{2(n+1)}\left(\log{p}\right)^{n+1}p^{\frac{1}{2}}\left(\frac{A\log{p}}{A\log{p}-2}\right)^k\left(1+\frac{\sqrt{2}}{2B\log{p}-3}\right) \\
&\leq 4e\left(\frac{n}{n+1}\right)^{n+1}\left(\log{p}\right)^{n+1}p^{\frac{1}{2}}\left(\frac{A\log{p}}{A\log{p}-2}\right)^k\left(1+\frac{\sqrt{2}}{2B\log{p}-3}\right)
\,.
\end{align*}
Since $n\leq \frac{1}{4}\log{p}$, we can show \[\frac{n}{n+1}\cdot\frac{A\log{p}}{A\log{p}-2}\leq \frac{\log{p}}{\log{p}+4}\cdot\frac{A\log{p}}{A\log{p}-2}=\frac{A(\log{p})^2}{A(\log{p})^2+(4A-2)\log{p}-8}\,.\]
This is less than 1 whenever $\log{p}>4$.
Since $k \leq n-1$ we can use this to drop some terms from the product, which yields
\[\frac{18}{\pi^2}H^2f\left(\frac{X}{u_1}\right)\leq 4e\left(\frac{n}{n+1}\right)^2(\log{p})^{n+1}p^{\frac{1}{2}}\left(1+\frac{\sqrt{2}}{2B\log{p}-3}\right)\,.\]

Isolating $H^2$, and noting that $f(\frac{X}{u_1}) \geq f(X^*)$, gives
\begin{align*}
H^2 &\leq\frac{2e\pi^2}{9}\left(\frac{n}{n+1}\right)^2(\log{p})^{n+1}p^{\frac{1}{2}}\left(1+\frac{\sqrt{2}}{2B\log{p}-3}\right) \frac{1}{f\left(X^*\right)}
\,.
\end{align*}
Taking the square root of both sides and rearranging gives the desired result.
\end{proof}


\begin{proof}[Proof of~Corollary~\ref{C:1}]
This follows immediately from Theorem~\ref{T:1}, letting $u$ be the product of the first $n-1$ prime nonresidues. The fact that this holds for all $p \geq p_0$ and $n \leq n_0$ can be verified by showing that $g(n,p)$ is decreasing with $p$ and increasing with $n$ under the conditions given.
This is not hard to verify.  Indeed, calculus can be used to show that
for $X^*>3.8$, the expression\[1-\frac{\pi^2}{9}\frac{\log X^*+9}{3X^*}\] is increasing with $X^*$, and that $X^*$ increases with $p$ and decreases with $n$.  Similarly, the term\[\frac{\pi}{3}\sqrt{2e}\left(\frac{n}{n+1}\right)\sqrt{1+\frac{\sqrt{2}}{2B\log{p}-3}}\] is increasing with $n$ and decreasing with $p$.
\end{proof}

\section*{Acknowledgements}
This research was completed as part of the Research Experience for Undergraduates and Teachers program at California State University, Chico funded
by the National Science Foundation (DMS-1559788).  We would also like to thank the anonymous referee for their helpful suggestions
which improved the quality of this paper.

\vspace{2ex}

{\footnotesize
\noindent
Shilin Ma\\
Carleton College\\
Carnegie Mellon University\\[0.5ex]

\noindent
Kevin J. McGown\\
California State University, Chico\\
University of New South Wales, Canberra\\[0.5ex]

\noindent
Devon Rhodes\\
California State University, Chico\\[0.5ex]

\noindent
Mathias Wanner\\
Villanova University\\
University of California, Santa Barbara\\[0.5ex]
}


\begin{thebibliography}{cc}
\bibitem{bib:Banks.Guo}
Banks, William D.; Guo, Victor Z.
\emph{Quadratic nonresidues below the Burgess bound.}
Int. J. Number Theory 13 (2017), no. 3, 751--759.
\bibitem{bib:Booker}
Booker, Andrew R.
\emph{On Mullin's second sequence of primes.}
Integers 12 (2012), no. 6, 1167--1177.
\bibitem{bib:Burgess1}
Burgess, D. A.
\emph{On character sums and primitive roots.}
Proc. London Math. Soc. (3) 12 (1962), 179--192.
\bibitem{bib:Burgess2}
Burgess, D. A.
\emph{A note on the distribution of residues and non-residues.}
J. London Math. Soc. 38 (1963), 253--256. 
\bibitem{bib:McGown.Lezowski}
Lezowski, Pierre; McGown, Kevin J.
\emph{The Euclidean algorithm in quintic and septic cyclic fields.}
Math. Comp. 86 (2017), no. 307, 2535--2549.
\bibitem{bib:McGown1}
McGown, Kevin J.
\emph{On the second smallest prime non-residue.}
J. Number Theory 133 (2013), no. 4, 1289--1299.
\bibitem{bib:McGown2}
McGown, Kevin J.
\emph{Norm-Euclidean cyclic fields of prime degree.}
Int. J. Number Theory 8 (2012), no. 1, 227--254.
\bibitem{bib:Norton}
Norton, Karl K.
\emph{Numbers with small prime factors, and the least kth power non-residue.}
Memoirs of the American Mathematical Society, No. 106 American Mathematical Society, Providence, R.I. 1971 ii+106 pp.
\bibitem{bib:Pollack}
Pollack, Paul.
\emph{Bounds for the first several prime character nonresidues.}
Proc. Amer. Math. Soc. 145 (2017), no. 7, 2815--2826.
\bibitem{bib:Pollack.Trevino}
Pollack, Paul; Trevi\~no, Enrique.
\emph{The primes that Euclid forgot.}
Amer. Math. Monthly 121 (2014), no. 5, 433--437. 
\bibitem{bib:Robbins}
Robbins, H.
\emph{A remark on Stirling's formula.}
Amer. Math. Monthly, 62 (1955), 26--29.
\bibitem{bib:Trevino2}
Trevi\~no, Enrique.
\emph{The Burgess inequality and the least kth power non-residue.}
Int. J. Number Theory 11 (2015), no. 5, 1653--1678.
\bibitem{bib:Trevino1}
Trevi\~no, Enrique.
\emph{The least k-th power non-residue.} 
J. Number Theory 149 (2015), 201--224.
\end{thebibliography}
\end{document}